\journal{Journal of Combinatorial Theory, Series A}
\def\ps@pprintTitle{%
    \let\@oddhead\@empty
    \let\@evenhead\@empty
    \def\@oddfoot{}%
    \let\@evenfoot\@oddfoot}
\newcommand{\corr}{\mathrm{CORR}}
\DeclareMathOperator{\conv}{conv}
\DeclareDocumentCommand\setdef{mo}{\left\{#1\IfNoValueTF{#2}{}{ : #2}\right\}}
\newcommand{\R}{\mathbb{R}}
\newcommand{\scalprod}[2]{\langle{#1},{#2}\rangle}
\newcommand{\udisj}{\mathrm{UDISJ}}
\newcommand{\disjpairs}{\mathcal{D}}
\newtheorem{theorem}{Theorem}
\newtheorem{lemma}{Lemma}
\begin{document}

%
%
\begin{frontmatter}

    \title{A Short Proof that the Extension Complexity of the Correlation Polytope Grows Exponentially}

    \author{Volker Kaibel}
    \ead{kaibel@ovgu.de}
    \author{Stefan Weltge\fnref{weltge}}
    \ead{weltge@ovgu.de}
    \fntext[weltge]{Partially funded by the German Research Foundation (DFG): ``Extended Formulations in Combinatorial
    Optimization'' (KA 1616/4-1)}

    \address{Otto-von-Guericke-Universit\"at~Magdeburg, Germany}


    \begin{abstract}
        We establish that the extension complexity of the $n\times n$ correlation polytope is at least $1.5\,^n$ by a
        short proof that is self-contained except for using the fact that every face of a polyhedron is the intersection
        of all facets it is contained in. The main innovative aspect of the proof is a simple combinatorial argument
        showing that the rectangle covering number of the unique-disjointness matrix is at least $1.5^n$, and thus the
        nondeterministic communication complexity of the unique-disjointness predicate is at least $.58n$. We thereby
        slightly improve on the previously best known lower bounds $1.24^n$ and $.31n$, respectively.
    \end{abstract}


    \begin{keyword}
        correlation polytope \sep extended formulations \sep unique disjointness \sep communication complexity
        \MSC[2010] 52Bxx \sep 90C57 \sep 94Axx
    \end{keyword}
\end{frontmatter}

%
%

\section{Introduction}

\noindent
The concept of extended formulations aims at writing polytopes as affine images of polyhedra of lower complexity. In
particular, for a polytope $ P $, one is interested in its \emph{extension complexity}, i.e., the smallest number of
facets of any polyhedron whose affine image is $ P $. As the first explicit example of a $0/1$-polytope whose extension
complexity is not bounded by a polynomial in its dimension, Fiorini~et~al.~\cite{FioriniMPTW12} showed that the
extension complexity of the \emph{correlation polytope}
\[
    \corr(n) := \conv \setdef{y \in \{0,1\}^{n \times n}}[y_{ij} = x_i x_j \ \forall \, i,j \in [n], \,x \in \{0,1\}^n]
\]
grows exponentially in $ n $. Since $ \corr(n) $ can be found as an affine image of a face of many other combinatorial
polytopes of similar dimension, this result has been used to show that the extension complexities of polytopes such as
traveling salesman polytopes~\cite{FioriniMPTW12}, certain stable set polytopes~\cite{FioriniMPTW12}, certain knapsack
polytopes~\cite{AvisT13,PokuttaV13}, and other polytopes associated with NP-hard optimization problems~\cite{AvisT13}
are also not bounded polynomially. Independently of the correlation polytope, Rothvoß~\cite{Rothvoss13} recently even
established an exponential lower bound on the extension complexity of the perfect matching polytope.

The proof of the statement on $ \corr(n) $ given in~\cite{FioriniMPTW12} follows a strategy developed
in~\cite{Yannakakis91} and uses a lower bound on the rectangle covering number of the unique-disjointness matrix
obtained in~\cite{Wolf03}, which essentially is due to~\cite{Razborov92}.  This amounts to a rather involved proof in
total, leaving it unclear how ``deep'' the result actually is (while its great relevance is out of discussion, of
course).

The aim of this paper is to provide a short combinatorial, self-contained (except for using the fact that every face of
a polyhedron is the intersection of all facets containing it) proof showing that the extension complexity of $ \corr(n)
$ is at least $ 1.5\,^n $. The main new contribution of the proof is a simple combinatorial argument (see the
half-a-page proof of Thm. \ref{thm:main}) instead of using~\cite{Wolf03,Razborov92}. Furthermore, the lower bound $
1.5\,^n $ improves slightly upon the previously best known one $ 1.24\,^n $ following from~\cite{BraunP13}.

%
%

\section{The Main Proof}
\label{sec:proof}

\noindent
For a nonnegative integer $ n $ we set $ [n] := \setdef{1,\dotsc,n} $ and define $ 2^{[n]} $ as the set of all subsets
of $ [n] $. The Euclidian scalar product of two vectors $ v,w $ is denoted by $ \scalprod{v}{w} = \sum_i v_i w_i $.
Further, for a set $ a \subseteq [n] $ let $ \chi(a) \in \{0,1\}^n $ be its characteristic vector, i.e., $ \chi(a)_i = 1
$ if and only if $ i \in a $. For a set $ b \subseteq [n] $ let $ y^b \in \{0,1\}^{n \times n} $ be the
0/1-matrix with $ y^b_{ij} = 1 $ if and only if $ i \in b $ and $ j \in b $ hold. With this notation, we have that $
\corr(n) = \conv \{ y^b : b \subseteq [n] \} $.

We first extract the single combinatorial property of $ \corr(n) $ that is relevant for the proof and then, by a few
polyhedral arguments, establish a general lower bound on the extension complexity of $ \corr(n) $ in terms of sizes of
so-called coverings. This part is basically a compact reformulation of known arguments.

\begin{lemma}
    For every $ a \subseteq [n] $ there is a face $ F_a $ of $ \corr(n) $ such that
    \[
        y^b \in F_a \iff |a \cap b| = 1
    \]
    holds for all $ b \subseteq [n] $.
\end{lemma}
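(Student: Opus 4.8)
The plan is to exhibit an explicit affine functional that is nonnegative on all the generating points $y^b$ and vanishes exactly on those with $|a \cap b| = 1$. If I can find such a functional $\ell$ with $\ell(y^b) \ge 0$ for all $b \subseteq [n]$ and $\ell(y^b) = 0 \iff |a \cap b| = 1$, then the hyperplane $\{y : \ell(y) = 0\}$ is a supporting hyperplane of $\corr(n)$, and the face $F_a := \corr(n) \cap \{y : \ell(y) = 0\}$ will contain exactly the vertices $y^b$ with $|a \cap b| = 1$. Since every vertex of $\corr(n)$ is one of the $y^b$, and a face is determined by which vertices it contains, this is precisely the desired face.

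The key computation is to evaluate $\scalprod{y^a}{y^b}$ and related quantities in terms of $|a \cap b|$. Let me set $k := |a \cap b|$. First I would note that $\sum_{i,j} y^b_{ij} = |b|^2$, and more usefully, for the fixed set $a$, the quantity $\sum_{i \in a, j \in a} y^b_{ij}$ counts pairs $(i,j)$ with $i,j \in a$ and $i,j \in b$, which equals $|a \cap b|^2 = k^2$. Similarly $\sum_{i \in a, j \in a}(y^b_{ij})$ restricted appropriately, together with the diagonal sum $\sum_{i \in a} y^b_{ii} = |a \cap b| = k$, gives me two natural linear expressions: a ``diagonal'' term equal to $k$ and a ``full block'' term equal to $k^2$. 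The idea is to combine these so that the result is a quadratic in $k$ that is nonnegative for integer $k \ge 0$ and vanishes exactly at $k = 1$. The natural choice is $(k-1)^2 = k^2 - 2k + 1$, which is $0$ iff $k = 1$ and strictly positive for every other integer $k$.

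Concretely, I would define the functional $\ell(y) := 1 - 2\sum_{i \in a} y_{ii} + \sum_{i \in a, j \in a} y_{ij}$, so that $\ell(y^b) = 1 - 2k + k^2 = (k-1)^2 = (|a \cap b| - 1)^2$. This is manifestly nonnegative on all vertices $y^b$ and equals zero if and only if $|a \cap b| = 1$, exactly as required. I would then take $F_a$ to be the intersection of $\corr(n)$ with the hyperplane $\ell = 0$; since $\ell \ge 0$ on all vertices and hence on the whole polytope, this is a genuine (possibly empty, but here nonempty) face, and the equivalence $y^b \in F_a \iff |a \cap b| = 1$ follows.

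I do not anticipate a serious obstacle here; the crux is simply spotting the right algebraic identity, namely recognizing that the diagonal and block sums of $y^b$ over the index set $a$ evaluate to $|a \cap b|$ and $|a \cap b|^2$ respectively, and that the combination yielding $(|a \cap b| - 1)^2$ is the unique clean quadratic vanishing only at $1$. The one point requiring mild care is confirming that a functional nonnegative on every vertex is nonnegative on the whole polytope (immediate by convexity) so that $\ell = 0$ really defines a face, and that the vertices of $\corr(n)$ are exactly the $y^b$ so that listing which vertices lie on the hyperplane suffices to characterize the face.
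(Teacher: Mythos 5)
Your proposal is correct and is essentially the paper's own proof: the functional $\ell(y) = 1 - 2\sum_{i \in a} y_{ii} + \sum_{i,j \in a} y_{ij}$ you construct is exactly the paper's linearization $\Pi_a$ of the quadratic $(\scalprod{\chi(a)}{x} - 1)^2$, and both arguments rest on the same identity $\ell(y^b) = (|a \cap b| - 1)^2 \ge 0$ with equality precisely when $|a \cap b| = 1$. Nothing needs to be fixed.
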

\begin{proof}
    For a set $ a \subseteq [n] $, let $ \pi_a(x) \in \R[x_i:i\in [n]] $ be the quadratic polynomial $
    (\scalprod{\chi(a)}{x} - 1)^2 $ with variable vector $ x=(x_1,\dotsc,x_n) $. Denote by $ \Pi_a(y) \in \R[y_{ij} :
    i,j \in [n]] $ the linear polynomial arising from $ \pi_a(x) $ by substituting each monomial $ x_i x_j $ by $ y_{ij}
    $ and each monomial $ x_i $ by $ y_{ii} $. Due to $ y^b_{ij} = \chi(b)_i \cdotp \chi(b)_j $ and $ y^b_{ii} = \chi(b)_i $ we
    have $\Pi_a(y^b) = \pi_a(\chi(b)) \ge 0 $ for each $ b \subseteq [n] $. This implies that the linear inequality $
    \Pi_a(y) \ge 0 $ is valid for $ \corr(n) $ and hence defines a face $ F_a $ of $ \corr(n) $. Note that a point $ y^b
    $ is contained in $ F_a $ if and only if $ \scalprod{\chi(a)}{\chi(b)} = 1 $, i.e., $ | a \cap b | = 1 $ holds.
\end{proof}

\noindent
Let us define the set $ \disjpairs(n) := \{ (a,b) \in 2^{[n]} \times 2^{[n]} : a \cap b = \emptyset \} $ of pairs of
disjoint subsets of $ [n] $. A set $ R \subseteq \disjpairs(n) $ is called  \emph{valid} if it satisfies
\begin{equation}
    \label{eq:validity}
    \forall \ (a,b), \, (a',b') \in R: \quad | a \cap b' | \neq 1.
\end{equation}
Further, we say that a set $ R_1,\dotsc,R_k $ of valid sets in $ \disjpairs(n) $ is a \emph{covering} of $
\disjpairs(n) $ of size $ k $ if
\[
    \disjpairs(n) \subseteq \bigcup_{i=1}^k R_i
\]
holds.

\begin{lemma}
    \label{lem:induced-covering}
    Let $ Q $ be a polyhedron having $ f $ facets such that $ \corr(n) $ is an affine image of $ Q $. Then there exists
    a covering of $ \disjpairs(n) $ of size $ f $.
\end{lemma}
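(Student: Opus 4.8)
The plan is to build the covering directly from a pull-back of the faces $F_a$ to faces of $Q$. Fix an affine map $p$ with $p(Q)=\corr(n)$ and, for every $b\subseteq[n]$, choose a preimage $q_b\in Q$ with $p(q_b)=y^b$. For each $a\subseteq[n]$ I would precompose the valid inequality $\Pi_a(y)\ge 0$ from the first lemma with $p$: since $p$ is affine, the map $x\mapsto\Pi_a(p(x))$ is an affine function that is nonnegative on all of $Q$, so its zero set $E_a:=\setdef{x\in Q}[\Pi_a(p(x))=0]$ is a face of $Q$. Because $\Pi_a(p(q_b))=\Pi_a(y^b)$, the defining property of $F_a$ yields the key equivalence $q_b\in E_a\iff y^b\in F_a\iff |a\cap b|=1$. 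In particular, for a disjoint pair $(a,b)\in\disjpairs(n)$ we have $|a\cap b|=0$, so $q_b\notin E_a$.

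Next I would pass from faces to facets. Write the facets of $Q$ as $G_1,\dots,G_f$, with $G_i=\setdef{x\in Q}[\scalprod{\alpha_i}{x}=\beta_i]$ coming from the defining inequalities $\scalprod{\alpha_i}{x}\le\beta_i$ of $Q$. The quoted fact that every face of a polyhedron is the intersection of the facets containing it gives $E_a=\bigcap_{i\in I_a}G_i$, where $I_a:=\setdef{i\in[f]}[E_a\subseteq G_i]$. For each facet index $i\in[f]$ I then set $R_i:=\setdef{(a,b)\in\disjpairs(n)}[i\in I_a \text{ and } \scalprod{\alpha_i}{q_b}<\beta_i]$, and claim that $R_1,\dots,R_f$ is the desired covering. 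The covering property is immediate: given $(a,b)\in\disjpairs(n)$ we have $q_b\notin E_a=\bigcap_{i\in I_a}G_i$, so some $i\in I_a$ satisfies $q_b\notin G_i$, i.e.\ $\scalprod{\alpha_i}{q_b}<\beta_i$, which is exactly $(a,b)\in R_i$.

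The heart of the argument is the validity of each $R_i$, and this is where the asymmetry of condition \eqref{eq:validity} must be respected. Take $(a,b),(a',b')\in R_i$ and suppose, for contradiction, that $|a\cap b'|=1$. Membership $(a,b)\in R_i$ supplies $i\in I_a$, hence $E_a\subseteq G_i$; membership $(a',b')\in R_i$ supplies $\scalprod{\alpha_i}{q_{b'}}<\beta_i$, i.e.\ $q_{b'}\notin G_i$. But $|a\cap b'|=1$ means $q_{b'}\in E_a\subseteq G_i$, a contradiction. Thus $|a\cap b'|\neq 1$ for all such pairs, so each $R_i$ is valid and $R_1,\dots,R_f$ is a covering of size $f$. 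The main thing to get right is precisely this bookkeeping --- using the $a$-condition of the first pair together with the $b'$-condition of the second to forbid $|a\cap b'|=1$; the only degenerate case is $a=\emptyset$, where $\Pi_\emptyset\equiv 1$ forces $E_\emptyset=\emptyset$, and the pairs $(\emptyset,b)$ are covered by applying the same facet-intersection fact to the empty face (equivalently, no $q_b$ can lie on every facet of $Q$).
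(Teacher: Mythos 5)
Your proposal is correct and takes essentially the same approach as the paper: covering sets indexed by the facets of $Q$, membership given by ``the pulled-back face of $F_a$ lies in the facet'' plus a certificate that $b$ avoids that facet, validity via the containment $E_a \subseteq G_i$, and the covering property via the quoted face-equals-intersection-of-facets fact (your $E_a$ is exactly the paper's $p^{-1}(F_a)$). The only deviation is that you certify avoidance upstairs through a chosen preimage ($q_b \notin G_i$) where the paper works downstairs with images ($y^b \notin p(G)$), which makes your sets slightly larger but changes nothing essential; even the degenerate case $a = \emptyset$ that you flag is treated (implicitly) the same way in the paper, by applying the facet-intersection fact to the empty face $p^{-1}(F_\emptyset)$.
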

\begin{proof}
    Let $ p $ be an affine map such that $ p(Q) = \corr(n) $. For every facet $ G $ of $ Q $ let us define the set
    \[
        R_G := \setdef{(a,b) \in \disjpairs(n)}[
            p^{-1}(F_a) \cap Q \subseteq G, \, p^{-1}(y^b) \cap Q \not \subseteq G].
    \]
    First, note that $ R_G $ is valid because otherwise there exist $ (a,b), \, (a',b') \in R_G $ with $ | a \cap b' |
    = 1 $, which implies $ y^{b'} \in F_a $, and hence we obtain $ p^{-1}(y^{b'}) \cap Q \subseteq
    p^{-1}(F_a) \cap Q \subseteq G, $ a contradiction to the definition of $ R_G $.

    Second, we claim that $ \setdef{R_G}[G \text{ facet of } Q] $ is a covering of $ \disjpairs(n) $. Towards this end,
    let $ (a,b) \in \disjpairs(n) $. Observe that $ p^{-1}(F_a) \cap Q $ is a face of $ Q $ and let $ G_1,\dotsc,G_k $
    be the facets of $ Q $ containing $ p^{-1}(F_a) \cap Q $. As $ y^b \notin F_a $ and $ p^{-1}(F_a) \cap Q =
    \bigcap_{i=1}^k G_i $, we obtain that there exists some $ i \in [k] $ for which $ p^{-1}(y^b) \cap Q \subseteq G_i $
    does not hold. Thus, we obtain $ (a,b) \in R_{G_i} $.
\end{proof}

\noindent
We are now ready to prove our main result:

\begin{theorem}
    \label{thm:main}
    The extension complexity of $ \corr(n) $ is at least $ 1.5\,^n $.
\end{theorem}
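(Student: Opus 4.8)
The plan is to combine Lemma~\ref{lem:induced-covering} with the definition of extension complexity and then reduce the whole statement to one purely combinatorial bound on the size of a single valid set. By definition the extension complexity of $\corr(n)$ is the smallest number $f$ of facets of a polyhedron $Q$ admitting an affine map $p$ with $p(Q)=\corr(n)$, and Lemma~\ref{lem:induced-covering} turns any such $Q$ into a covering of $\disjpairs(n)$ of size $f$; hence it suffices to prove that every covering of $\disjpairs(n)$ has size at least $1.5\,^n$. Given a covering $R_1,\dots,R_k$, I would first count $|\disjpairs(n)| = 3^n$, since each element of $[n]$ lies in $a$, in $b$, or in neither, and then apply the pigeonhole bound $3^n = |\disjpairs(n)| \le \sum_{i=1}^k |R_i| \le k\cdot\max_i |R_i|$. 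Thus the theorem reduces to the single claim that every valid set $R\subseteq\disjpairs(n)$ satisfies $|R|\le 2^n$, which yields $k\ge 3^n/2^n = 1.5\,^n$.

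To establish $|R|\le 2^n$ I would induct on $n$, splitting a valid set $R$ according to the role of the last coordinate: let $R_A$, $R_B$, $R_N$ consist of the pairs $(a,b)\in R$ with $n\in a$, with $n\in b$, and with $n\notin a\cup b$, respectively. Deleting $n$ produces the reduced families $R_A' := \{(a\setminus\{n\},b) : (a,b)\in R_A\}$ and $R_B' := \{(a,b\setminus\{n\}) : (a,b)\in R_B\}$ in $\disjpairs(n-1)$, and these deletions are injective, so $|R_A|=|R_A'|$ and $|R_B|=|R_B'|$. The key observations would be: (i) in $R_N\cup R_A'$ every second component omits $n$, and in $R_N\cup R_B'$ every first component omits $n$, so the relevant intersections $|\alpha\cap\beta'|$ for pairs in these families coincide with the corresponding intersections of the original pairs in $R$; hence both $R_N\cup R_A'$ and $R_N\cup R_B'$ are valid and, by the induction hypothesis, have at most $2^{n-1}$ elements; and (ii) $R_A'\cap R_B' = \emptyset$. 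Granting these, I would write $|R|=|R_A'|+|R_B'|+|R_N|$, add the two bounds $|R_N\cup R_A'|\le 2^{n-1}$ and $|R_N\cup R_B'|\le 2^{n-1}$, and use that $R_A'\cap R_N$ and $R_B'\cap R_N$ are disjoint subsets of $R_N$ by (ii); a short computation then gives $|R|\le 2^n$, with the base case $n=0$ immediate.

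The step I expect to carry the whole argument is (ii): a common element of $R_A'$ and $R_B'$ would arise from pairs $(a,b)\in R_A$ and $(a',b')\in R_B$ with $a\setminus\{n\}=a'$ and $b=b'\setminus\{n\}$, and since $a'\cap b=\emptyset$ this forces $a\cap b' = \{n\}$, i.e.\ $|a\cap b'|=1$, which is exactly what validity~\eqref{eq:validity} forbids. This is the one place where the defining inequality $|a\cap b'|\neq 1$ enters essentially, so handling the bookkeeping around it correctly — in particular verifying in (i) that validity is genuinely inherited by $R_N\cup R_A'$ and $R_N\cup R_B'$ despite the asymmetry of~\eqref{eq:validity} in its two components — is the main obstacle. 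Everything else is pigeonhole together with the elementary set arithmetic built on the disjointness in (ii).
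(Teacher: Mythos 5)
Your proposal is correct, and its skeleton is the same as the paper's: the reduction via Lemma~\ref{lem:induced-covering}, the count $|\disjpairs(n)|=3^n$, the pigeonhole step, and the inductive claim that every valid set has at most $2^n$ elements all coincide, and your induction rests on exactly the two facts the paper uses --- deleting $n$ leaves the relevant intersections unchanged because all the untouched components avoid $n$, and validity~\eqref{eq:validity} forbids $(\alpha\cup\{n\},\beta)$ and $(\alpha,\beta\cup\{n\})$ from both lying in $R$. The divergence is only in how the inductive step is bookkept. The paper selects two sub-families $R_1,R_2\subseteq R$, building the exclusion of ``doubly augmentable'' pairs into their definitions (the conditions $(a\cup\{n\},b)\notin R$, resp.\ $(a,b\cup\{n\})\notin R$), shows the deletion map is injective on each and that $R\subseteq R_1\cup R_2$, and concludes $|R|\le|f(R_1)|+|f(R_2)|\le 2\varrho(n-1)$ by a plain sum. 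You instead partition $R=R_A\cup R_B\cup R_N$, apply the inductive hypothesis to the two valid families $R_N\cup R_A'$ and $R_N\cup R_B'$, and recover the bound by inclusion--exclusion: since $R_A'\cap R_B'=\emptyset$, the sets $R_N\cap R_A'$ and $R_N\cap R_B'$ are disjoint subsets of $R_N$, so $|R_N\cup R_A'|+|R_N\cup R_B'|\ge 2|R_N|+|R_A'|+|R_B'|-|R_N|=|R|$, giving $|R|\le 2\cdot 2^{n-1}$. These are dual ways of discharging the same double-counting problem: your claim (ii) is literally the paper's observation that no pair avoiding $n$ has both augmentations in $R$. The paper's version buys a shorter final count with no intersections to track; yours buys families defined without membership tests in $R$, at the price of the extra validity check for the mixed unions in your step (i) --- which does go through, including the cross cases between $R_N$ and $R_A'$ --- and the final set arithmetic. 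Both yield the same inequality $\varrho(n)\le 2\varrho(n-1)$, hence the same bound $1.5\,^n$.
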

\begin{proof}
    By Lemma \ref{lem:induced-covering}, it suffices to show that any covering of $ \disjpairs(n) $ has size at least
    $ 1.5\,^n $. Therefore, let $ \varrho(n) $ be the largest cardinality of any valid subset of $ \disjpairs(n) $. By
    the fact that any covering of $ \disjpairs(n) $ must have size of at least $ \frac{|\disjpairs(n)|}{\varrho(n)} $
    and the fact that $ |\disjpairs(n)| = 3^n $, it remains to show that $ \varrho(n) \leq 2^n $, which we will
    establish by showing that $ \varrho(n) \leq 2 \varrho(n-1) $ holds for all $ n \geq 1 $. (Note that $ \varrho(0) = 1
    $ since the only valid subset of $ \disjpairs(n) $ is $ \{ ( \emptyset, \emptyset ) \} $.)

    Towards this end, let $ R \subseteq \disjpairs(n) $ be valid (with $ n \geq 1 $) and let us
    define the following two sets:
    \begin{align*}
        R_1 & := \left( \setdef{(a,b) \in R}[n \in a] \cup \setdef{(a,b) \in R}[(a \cup \{n\},b) \notin R] \right) \,
        \cap \, ([n] \times [n-1]) \\
        R_2 & := \left( \setdef{(a,b) \in R}[n \in b] \cup \setdef{(a,b) \in R}[(a,b \cup \{n\}) \notin R] \right) \,
        \cap \, ([n-1] \times [n])
    \end{align*}
    Further, let us define the function $ f \colon R \to \disjpairs(n-1) $ with $ f(a,b) := (a \setminus \{n\}, b
    \setminus \{n\}) $. Since $ R_1 \subseteq R $ is valid and since $ R_1 \subseteq [n] \times [n-1] $, $ f(R_1) $ is
    valid. Similarly, $ f(R_2) $ is also valid. Further, by the definition of $ R_i $, $ f $ is injective on $ R_i $ for
    $ i = 1,2 $. By induction, we hence obtained that
    \[
        |R_1| + |R_2| = |f(R_1)| + |f(R_2)| \leq 2 \varrho(n-1) = 2^n.
    \]
    Thus, it suffices to show that each $ (a,b) \in R $ is contained in $ R_1 \cup R_2 $. Since $ a \cap b = \emptyset $, we
    have that $ (a,b) \subseteq ([n] \times [n-1]) \cup ([n-1] \times [n]) $. Thus, if $ n \in a \cup b $, we clearly
    have that $ (a,b) \in R_1 \cup R_2 $. It remains to show that for any $ (a,b) \in R $ with $ n \notin a \cup b $, we
    cannot have that $ (a \cup \{n\},b) \in R $ and $ (a,b \cup \{n\}) \in R $. Indeed, this is true since, otherwise,
    the validity of $ R $ would imply
    \[
        1 \neq |(a \cup \{n\}) \cap (b \cup \{n\})| = | \{n\} | = 1,
    \]
    a contradiction.
\end{proof}

%
%

\section{Remarks on Related Results}

\subsubsection*{From the Perspective of Communication Complexity}

\noindent
Using the terminology from the theory of communication complexity, the proof of Theorem \ref{thm:main} shows that the
\emph{rectangle covering number} of the \emph{unique-disjointness matrix} $ \udisj(n) $ (see, e.g., \cite{Jukna12})
is at least $ 1.5\,^n $. To see that, observe that our notion of valid sets corresponds to sets of 1-entries in $
\udisj(n) $ that can be covered simultaneously by one rectangle. In particular, this implies that the
\emph{nondeterministic communication complexity} of the \emph{unique-disjointness} predicate is at least $
\log_2(1.5\,^n) \geq .58n $. For the background of these remarks, we refer to \cite{KushilevitzN06} or \cite{Jukna12}.

\subsubsection*{Applicability to the Matching Polytope}

\noindent
Most superpolynomial lower bounds on the extension complexities of combinatorial polytopes are a direct consequence of
the fact that the extension complexity of the correlation polytope grows exponentially and hence can also be derived
from our argumentation. In contrast to this, Rothvoß'~\cite{Rothvoss13} result on an exponential lower bound on the
extension complexity of the (perfect) matching polytope of the complete graph seems to be of a considerably more
complicated nature. It follows already from \cite{Yannakakis91} that this result cannot be deduced from the results on
the correlation polytope in a similar manner as it is possible for, say, the TSP polytope. In fact, Rothvoß' approach
exploits more than the mere combinatorial structure of the matching polytopes. The ideas underlying the proof presented
in this paper seem to be of little use in that context, leaving wide open the question for a similarly simple proof of
the fact that the extension complexity of the perfect matching polytope cannot be bounded polynomially.

%
%
\paragraph*{Acknowledgements}
We thank Yuri Faenza and Kanstantsin Pashkovich for their helpful remarks on an earlier version of this paper.

\bibliographystyle{elsarticle-num}
\bibliography{references}

\end{document}